\title{Higher-Order Autocorrelations on Finite Abelian Groups}
\author{Aaron Agulnick, Toby Busick-Warner}
\date{April 14, 2026}
\DocumentMetadata{testphase=new-or-1} % supress links when they cross page boundaries
\documentclass[10pt]{article} 

\usepackage{amsmath}                                  % math
\usepackage{amssymb}                                  % symbols
\usepackage{amsfonts}                                 % fonts
\usepackage{amsthm}                                   % theorem environments
\usepackage{mathtools}                                % nice looking sum limits
\usepackage[backend=biber,style=alphabetic]{biblatex} % bibliography
\usepackage{hyperref}                                 % links
\usepackage{microtype}                                % microtypography
\usepackage{thmtools}
\usepackage{thm-restate}
\usepackage{cleveref}

\theoremstyle{plain}
\newtheorem{theorem}{Theorem}[section]

\newtheorem{lemma}[theorem]{Lemma}
\newtheorem{proposition}[theorem]{Proposition}
\theoremstyle{definition}
\newtheorem{definition}[theorem]{Definition}
\newtheorem{example}[theorem]{Example}

\newcommand{\CC}{\mathbb{C}}
\newcommand{\QQ}{\mathbb{Q}}
\newcommand{\RR}{\mathbb{R}}
\newcommand{\ZZ}{\mathbb{Z}}
\newcommand{\NN}{\mathbb{N}}
\newcommand{\bangle}[1]{\langle {#1} \rangle}
\newcommand{\conj}[1]{\overline{#1}}
\newcommand{\dual}[1]{\widehat{#1}}

\DeclareMathOperator{\Gal}{Gal}

\DeclareMathOperator{\tr}{tr}

\begin{document}
\maketitle

\begin{abstract}
    The question of determining a signal from its higher-order autocorrelation data is of practical interest in fields as varied as X-ray crystallography, image processing, and satellite communications. At the heart of the issue is how much of this autocorrelation data one truly needs. We prove two new upper bounds on the order of data needed to determine a signal on a general (i.e. not necessarily cyclic) finite abelian group depending on some knowledge of the vanishing of the signal's Fourier transform. In investigating lower bounds on the required data, we classify signals on \(\ZZ_6\) \textbf{not} determined by their fifth-order data and provide analogous examples on \(\ZZ_{30}\).
\end{abstract}

\section{Introduction}
For a locally compact abelian group \(G\) and a function \(f \in L^1(G,\CC)\), the \(n\)th-order autocorrelation of \(f\) is a function 
\begin{equation} \label{def: autocorrelation}
    \rho_n(f)(t_1, \dotsc, t_{n-1}) = \int_G f(x)f(x+t_1)\dotsm f(x+t_{n-1})dx.
\end{equation}

It is often more natural to consider on the Fourier side. An elementary computation with Fubini's theorem shows 
\begin{equation} \label{autocorrelation fourier transform}
    \dual{\rho}_n(f)(\chi_1, \dotsc, \chi_{n-1}) = \hat{f}(\chi_1)\hat{f}(\chi_2)\dotsm\hat{f}(\chi_{n-1})\hat{f}(\chi_1^{-1}\chi_2^{-1} \dotsm \chi_{n-1}^{-1}).
\end{equation}
We may instead interpret \(\dual{\rho}_n(f)\) as a function on \(n\) characters whose product is the identity.

Low-order autocorrelation data is already a frequently interesting statistic: 
the values of the second-order autocorrelation \(\rho_2(1_E)\) of the characteristic function of a set \(E\) contains exactly the same information as the difference multiset \(E-E\), 
and on the Fourier side, \(\dual{\rho}_2(f)(e_k) = \hat{f}(e_k)\hat{f}(e_{-k}) = |\hat{f}(e_k)|^2\) when \(f\) is real-valued. 
In particular, this data is familiar to X-ray crystallographers as precisely the statistic one measures as the diffraction pattern of an X-ray shone through a crystal. 

The central question in both of these applications is the same: 
how can \(f\) be reconstructed from its autocorrelation data?
It is clear from translation-invariance of the integral in (\ref{def: autocorrelation}) that  \(\rho_n(f)\) is a \emph{translation-invariant}: 
if there exists \(a\in G\) such that \(f(x) = g(x+a)\) for all \(x\), then \(\rho_n(f) = \rho_n(g)\) for all \(n\). 
But \(\dual{\rho}_2(f) = |\hat{f}|^2\) reveals a more serious issue, the so-called \emph{phase problem} of X-ray crystallography: 
second-order data offers only the magnitude of the complex-valued \(\hat{f}\), not its phase, and hence is insufficient to determine \(f\) in any reasonable case. 
Indeed, Pauling and Shappell \cite{pauling_8_1930} discovered two forms of the mineral bixbyite with different crystalline structures which are not distinguished by second-order autocorrelation data. 
This provided a counterexample to a claim of Patterson \cite{patterson_direct_nodate}, sparking a long-standing interest in the classification of such counterexamples, called \emph{(strictly) homometric sets}. 
A full account of this story can be found in \cite{moore_patterson_1989}, and an algebraic approach to this classification appears in \cite{rosenblatt_phase_1984}. 

Despite the issue of homometric structures, useful data can still be extracted from the autocorrelations of \(f\). 
In satellite communication, third-order autocorrelations are experimentally extracted from an ultrashort laser pulse \cite{paulter_new_1991} and used to infer information about such signals even with equipment which is necessarily imprecise so as to withstand such high-intensity electromagnetic fields \cite{trebino_highly_2020}. 
In X-ray crystallography, statistical inferences about third-order autocorrelation data were used by Hauptman \cite{hauptman_phase_1991} to solve (certain instances of) the phase problem, a much-celebrated result. 

The success of third-order autocorrelations in applications is owed to the following fact: 
\begin{proposition}
    If \(f\) and \(g\) are \(\RR\)-valued functions on a locally compact abelian group \(G\) which have almost-nowhere-vanishing Fourier transforms and share autocorrelation data through order three, i.e. \(\rho_n(f) = \rho_n(g)\) for \(n=1,2,3\), then \(f(x) = g(x+a)\) for some \(a \in G\) and all \(x \in G\).
\end{proposition}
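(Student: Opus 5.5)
The plan is to work entirely on the Fourier side, using (\ref{autocorrelation fourier transform}), and to show that the quotient of the two transforms is a character of the dual group, hence a translation.

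First I set up the quotient. Since the Fourier transform is injective on \(L^1(G)\), we have \(\rho_n(f)=\rho_n(g)\) if and only if \(\dual{\rho}_n(f)=\dual{\rho}_n(g)\). Both \(\hat f\) and \(\hat g\) are almost nowhere vanishing, so I define \(u(\chi) = \hat f(\chi)/\hat g(\chi)\) on the full-measure set where both are nonzero.

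Next I extract the three constraints on \(u\):
\begin{itemize}
\item Order two gives \(\hat f(\chi)\hat f(\chi^{-1}) = \hat g(\chi)\hat g(\chi^{-1})\), so \(u(\chi)u(\chi^{-1})=1\).
\item Because \(f,g\) are real-valued, \(\hat f(\chi^{-1})=\conj{\hat f(\chi)}\), and likewise for \(g\). Hence \(u(\chi^{-1})=\conj{u(\chi)}\), and combining with the previous point gives \(|u|=1\) almost everywhere.
\item Order three gives \(\hat f(\chi_1)\hat f(\chi_2)\hat f(\chi_1^{-1}\chi_2^{-1}) = \hat g(\chi_1)\hat g(\chi_2)\hat g(\chi_1^{-1}\chi_2^{-1})\). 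This yields \(u(\chi_1)u(\chi_2)u((\chi_1\chi_2)^{-1})=1\), and using \(u(\psi^{-1})=u(\psi)^{-1}\) it becomes \(u(\chi_1\chi_2)=u(\chi_1)u(\chi_2)\) for almost every pair \((\chi_1,\chi_2)\).
\end{itemize}
Order one only says \(\hat f(1)=\hat g(1)\), which is consistent with \(u(1)=1\).

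The main obstacle is that \(u\) is multiplicative only almost everywhere and is not yet known to be continuous. I therefore need a measurable-homomorphism lemma: a measurable function \(u:\dual G\to \mathbb{T}\) satisfying \(u(\chi_1\chi_2)=u(\chi_1)u(\chi_2)\) for almost all pairs agrees almost everywhere with a continuous character of \(\dual G\). I would prove it by convolution smoothing. Pick \(\phi\in C_c(\dual G)\) with \(\int u\phi\neq 0\). By Fubini, for almost every \(\chi_1\),
\[
u(\chi_1)\int u\phi = \int u(\chi_1\chi_2)\phi(\chi_2)\,d\chi_2 .
\]
The right-hand side is continuous in \(\chi_1\), because translation is continuous in \(L^1\) and \(u\) is bounded. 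So \(u\) agrees almost everywhere with a continuous function \(\tilde u\), and \(\tilde u\) is then multiplicative everywhere by continuity and density of full-measure sets. In the discrete or finite case this step is trivial, since the null sets are empty.

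Finally, Pontryagin duality gives \(a\in G\) with \(\tilde u(\chi)=\chi(a)\). Then \(\hat f(\chi)=\chi(a)\hat g(\chi)\) almost everywhere, which is the Fourier transform of a translate of \(g\); the sign of \(a\) depends on the transform convention and can be fixed at this point. Fourier uniqueness on \(L^1(G)\) then gives \(f(x)=g(x+a)\) almost everywhere, completing the proof.
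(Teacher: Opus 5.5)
Your proof is correct. The paper does not prove this proposition at all. It only presents it as a ``shadow'' of Theorem~\ref{adlerkonheim}, whose mechanism (visible in the paper's proof of Theorem~\ref{mainthm:B}) is the one you use: second- and third-order data make $\hat f/\hat g$ a unimodular multiplicative function on $S(f)$, and duality identifies it with $e\mapsto e(t)$.

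Where you differ is in confronting the analytic issue that this framing hides. When $\dual{G}$ is not discrete, ``almost nowhere vanishing'' only makes $S(f)$ open and conull. So in general $S(f)\cup\{1\}\neq\dual{G}$, and the case $v=1$ of Theorem~\ref{adlerkonheim} is unavailable. Since $S(f)S(f)=\dual{G}$, that theorem by itself only gives $v=2$, i.e.\ completeness through order six. Your smoothing lemma upgrades an almost-everywhere multiplicative unimodular ratio to a continuous character, and that is exactly what brings the order down to three. For finite $G$, which is the paper's main setting, null sets are empty and $S(f)=\dual{G}$. There your argument collapses to the $v=1$ case and the smoothing step is vacuous.

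One small tightening is available. Since $\dual{\rho}_3(f)$ and $\dual{\rho}_3(g)$ are continuous, the order-three identity holds pointwise, not just almost everywhere. So for every $\chi_1\in S(f)$, multiplicativity holds for all $\chi_2$ in the open conull set $S(f)\cap\chi_1^{-1}S(f)$. This gives your smoothing identity for every $\chi_1\in S(f)$, hence $\tilde u=u$ on all of $S(f)$. Multiplicativity of $\tilde u$ everywhere then follows from continuity and the density of the open set of pairs with $\chi_1,\chi_2,\chi_1\chi_2\in S(f)$. This route needs no Fubini on $\dual{G}\times\dual{G}$, which avoids $\sigma$-finiteness issues when $\dual{G}$ is not $\sigma$-compact.
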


In light of results like the above, whenever \(\rho_i(f) = \rho_i(g)\) for all \(i \in I \subseteq \NN\) implies that \(f\) and \(g\) differ by a translation, we say that \(\{\rho_i: i \in I\}\) is a \emph{complete set} of translation invariants for \(f\). The core of the issue is the vanishing of the Fourier transform, which is already clearly a source of data loss in the expression (\ref{autocorrelation fourier transform}), so we make the following central definition: \begin{definition}
    The support of a function $f: G \to \CC$ is \begin{equation*}
        S(f) := \{x \in G: \hat{f}(x) \neq 0\}.
    \end{equation*}
\end{definition}
The above proposition is a shadow of the following theorem, which is more complicated in its statement but vastly more powerful: \begin{theorem} \label{adlerkonheim}
    \textnormal{\cite[p.\ 428]{adler}}
    For \(G\) a locally compact abelian group and \(f,g\in L^1(G,\RR)\), if every element of the generated subgroup \(\bangle{S(f)} \leq \dual{G}\) is a product of at most \(v\) elements of \(S(f)\), 
    then \(\{\rho_n : n = 1, \dotsc, 3v\}\) is a complete set of translation invariants.
\end{theorem}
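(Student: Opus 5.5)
Work on the Fourier side. Suppose \(\rho_n(f)=\rho_n(g)\) for \(n\le 3v\). Then \(\hat f\) and \(\hat g\) satisfy
\[
\hat f(\chi_1)\dotsm\hat f(\chi_{n-1})\hat f\bigl((\chi_1\dotsm\chi_{n-1})^{-1}\bigr)=\hat g(\chi_1)\dotsm\hat g(\chi_{n-1})\hat g\bigl((\chi_1\dotsm\chi_{n-1})^{-1}\bigr)
\]
for all characters and all \(n\le 3v\).

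\emph{Equal supports and a phase function.} From \(n=2\) and the reality of \(f,g\) (so \(\hat f(\chi^{-1})=\conj{\hat f(\chi)}\)) we get \(|\hat f|=|\hat g|\). Hence \(S(f)=S(g)=:S\), and on \(S\) we may write \(\hat g(\chi)=u(\chi)\hat f(\chi)\) with \(|u(\chi)|=1\) and \(u(\chi^{-1})=\conj{u(\chi)}\). Dividing the identity above by the \(\hat f\)-product whenever all factors are nonzero gives
\[
u(\chi_1)\dotsm u(\chi_{n})=1 \quad\text{whenever } \chi_1\dotsm\chi_n=1,\ \chi_i\in S,\ n\le 3v.
\]
The goal is to show that \(u\) is the restriction to \(S\) of a character of \(\bangle{S}\) that extends to a character of \(\dual G\), i.e.\ evaluation at some \(a\in G\). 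Then \(\hat g(\chi)=\chi(a)\hat f(\chi)\) everywhere, since both sides vanish off \(S\), and Fourier uniqueness gives \(g=f(\cdot+a)\).

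\emph{Extending \(u\) to a homomorphism on \(\bangle{S}\).} Each \(\gamma\in\bangle{S}\) can be written \(\gamma=\chi_1\dotsm\chi_k\) with \(k\le v\) and \(\chi_i\in S\). Define \(U(\gamma)=u(\chi_1)\dotsm u(\chi_k)\).

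\begin{itemize}
\item \emph{Well-definedness.} If \(\chi_1\dotsm\chi_k=\psi_1\dotsm\psi_m\) with \(k,m\le v\), then \(\chi_1\dotsm\chi_k\psi_1^{-1}\dotsm\psi_m^{-1}=1\). This is a relation of length at most \(2v\le 3v\), and \(S\) is closed under inversion, so the vanishing relation gives \(\prod u(\chi_i)=\prod u(\psi_j)\).
\item \emph{Multiplicativity.} For \(\gamma,\delta\in\bangle{S}\), write \(\gamma=\prod\chi_i\), \(\delta=\prod\psi_j\) and \(\gamma\delta=\prod\omega_l\), each with at most \(v\) factors. The relation \(\prod\chi_i\prod\psi_j\prod\omega_l^{-1}=1\) has length at most \(3v\), so \(U(\gamma)U(\delta)=U(\gamma\delta)\).
\end{itemize}

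This is exactly where \(3v\) enters. Thus \(U\) is a unitary character of the discrete group \(\bangle{S}\). For the degenerate cases \(\chi=1\) and \(n=1\), note that \(\rho_1\) gives \(\hat f(1)=\hat g(1)\), so \(u(1)=1\) when \(1\in S\).

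\emph{Main obstacle: realizing \(U\) as a point of \(G\).} We must show \(U\) comes from evaluation at some \(a\in G\). I would extend \(U\) from the subgroup \(\bangle{S}\) to a character of \(\dual G\) viewed as a discrete group, using divisibility of the circle group. The difficulty is that Pontryagin duality only identifies \emph{continuous} characters of \(\dual G\) with points of \(G\).

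\begin{itemize}
\item When \(G\) is finite, or \(\dual G\) is discrete, this is immediate, and it is the case this paper needs.
\item In general, I would first use the bounded word length to show that \(\bangle{S}\) is open in \(\dual G\). Bounded word length makes \(\bangle{S}=\bigcup_{k\le v}(S\cup S^{-1}\cup\{1\})^k\), and since \(S\) is open, \(\bangle{S}\) is open. Next I would show \(U\) is continuous on \(\bangle{S}\), because \(u=\hat g/\hat f\) is continuous on the open set \(S\). A continuous character of an open subgroup extends to a continuous character of \(\dual G\), since \(\dual G/\bangle{S}\) is discrete and the circle is divisible. Duality then supplies the point \(a\).
\end{itemize}

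I expect this continuity and extension bookkeeping to be the main technical step. The algebraic core is the well-definedness and multiplicativity argument above, which is what consumes exactly \(3v\) orders of data.
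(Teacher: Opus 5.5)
Your proof is correct. The paper only cites this result, but its proof of Theorem~\ref{mainthm:B} uses the same template: a character built from the ratio of \(\hat f\) and \(\hat g\) on words in \(S(f)\), well-defined by relations of length \(2v\), multiplicative by relations of length \(3v\) via Lemma~\ref{2 and k}, then duality. Your last step is more careful than the paper's, which goes straight from a character of \(\bangle{S(f)}\) to ``the duality theorem'' without extending it to \(\dual{G}\) or checking continuity; your openness, continuity and divisibility argument fills this in. One simplification: openness of \(\bangle{S}\) follows because any subgroup containing the nonempty open set \(S\) is open, and \(S\cup S^{-1}\cup\{1\}\) itself need not be open when \(1\notin S\).
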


For applications, a signal on \(\RR^3\) can be discretized into a signal on \(\ZZ_{N_1} \times \ZZ_{N_2} \times \ZZ_{N_3}\), and the granularity of this discretization can be chosen so that \(N_1, N_2, N_3\) are pairwise relatively prime and hence \(\ZZ_{N_1} \times \ZZ_{N_2} \times \ZZ_{N_3} \cong \ZZ_{N_1 N_2 N_3}\). 
However, despite the successes of third-order autocorrelation data in this setting for numerous applications, it is shown in \cite{grunbaum} that even in the case of a finite cyclic domain, data through order three is, in general, insufficient. 
They prove the following theorem: 
\begin{theorem} \label{grunbaum-moore}
    \textnormal{\cite[p.\ 318]{grunbaum}}
    For \(f,g: \ZZ_N \to \QQ\), \(\{\rho_n : n = 1,2,3,4,5,6\}\) is a complete set of invariants.
    For \(N\) odd, \(\{\rho_n : n = 1,2,3,4\}\) is a complete set.
\end{theorem}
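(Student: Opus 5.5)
The plan is to work on the Fourier side and use the fact that \(f\) and \(g\) are rational-valued, which Theorem~\ref{adlerkonheim} alone does not exploit. This matters: for \(N=30\) and \(S(f)=D_6\cup D_{10}\cup D_{15}\) (notation below), the element \(1\) is not a sum of two elements of \(S(f)\), so \(v=2\) fails there.

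\textbf{Setup.} Identify \(\dual{\ZZ_N}\) with \(\ZZ_N\). For \(a\in\ZZ_N^\times\) let \(\sigma_a\in\Gal(\QQ(\zeta_N)/\QQ)\) be the automorphism with \(\zeta_N\mapsto\zeta_N^a\). Since \(f\) is \(\QQ\)-valued, \(\hat f(ak)=\sigma_a(\hat f(k))\). Hence \(S(f)\) is a union of Galois orbits
\[
D_d=\{k\in\ZZ_N:\gcd(k,N)=d\}, \qquad d\mid N.
\]
From \(\rho_2(f)=\rho_2(g)\) we get \(|\hat f|=|\hat g|\), so \(S(f)=S(g)=:S\). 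On \(S\) define \(u(k)=\hat g(k)/\hat f(k)\in\QQ(\zeta_N)\).

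\textbf{Properties of \(u\).}
\begin{itemize}
\item \(u(0)=1\) (from \(\rho_1\)) and \(|u|=1\) on \(S\).
\item \(u(ak)=\sigma_a(u(k))\). Hence every Galois conjugate of \(u(k)\) has absolute value \(1\), and by Kronecker's theorem \(u(k)\) is a root of unity in \(\QQ(\zeta_N)\), so \(u(k)=\pm\zeta_N^{j}\).
\item By (\ref{autocorrelation fourier transform}), \(\rho_n(f)=\rho_n(g)\) gives \(u(k_1)\dotsm u(k_n)=1\) whenever \(k_1,\dots,k_n\in S\) and \(k_1+\dots+k_n=0\).
\end{itemize}
The goal is a \(c\in\ZZ_N\) with \(u(k)=\zeta_N^{ck}\) for all \(k\in S\). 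Then \(\hat g(k)=\zeta_N^{ck}\hat f(k)\) for every \(k\), so \(g\) is a translate of \(f\).

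\textbf{Reduction.} First, if \(\bangle{S}=m\ZZ_N\), then \(f\) and \(g\) are periodic with period \(N/m\), and we pass to \(\ZZ_{N/m}\). So we may assume \(\gcd\) of the \(d\) with \(D_d\subseteq S\) is \(1\).

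\textbf{Step 1: within one orbit.} On each orbit \(D_d\subseteq S\), equivariance gives \(u(ad)=\sigma_a(u(d))\) for units \(a\). We then need relations among elements of \(D_d\) to force \(u(d)\) to have the form \(\zeta_N^{c_d d}\).
\begin{itemize}
\item If \(N/d\) is odd, \(d+d=2d\in D_d\). The order-\(3\) relation gives \(u(d)^2=u(2d)=\sigma_2(u(d))\). Writing \(u(d)=\pm\zeta_N^j\), this kills the sign and constrains \(j\) modulo the appropriate divisor.
\item If \(N/d\) is even, two units never sum to a unit. Instead use \(d=da+db+dc\) with \(a,b,c\) units modulo \(N/d\); this is an order-\(4\) relation. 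Together with \(\sigma\)-equivariance, it should again pin \(u\) down to a character on \(D_d\), possibly up to a sign ambiguity.
\end{itemize}

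\textbf{Step 2: gluing across orbits.} Orbits \(D_d\) and \(D_{d'}\) with \(\gcd(d,d')\) smaller are linked by relations \(x+y+z=0\) or \(x+y+z+w=0\) with terms drawn from different orbits. From these, choose a single \(c\) compatible with all the local exponents \(c_d\). Because \(\gcd\) of the \(d\) is \(1\), such a \(c\) exists provided enough cross-relations are available. For odd \(N\), every unit-type step works with at most four terms, which should give completeness of \(\{\rho_1,\dots,\rho_4\}\). For even \(N\), the residual signs \(\pm\) coming from the \(2\)-part require relations with up to six terms, e.g.\ three elements of \(D_d\) and three of \(D_{d'}\) summing to zero. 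This explains the bound \(6\).

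\textbf{Main obstacle.} I expect the hard part to be Step 2 in the even case. One must show that the \(\pm1\) ambiguities on the various orbits are simultaneously resolved by relations of length at most six, and that the local exponents are compatible. I would first try to do this prime-by-prime via the CRT decomposition \(\ZZ_N\cong\prod\ZZ_{p^e}\), handling the \(2\)-primary factor separately. The orbit structure factors along this decomposition, so it should suffice to produce explicit short zero-sum relations for each prime power.
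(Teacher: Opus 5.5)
\textbf{The Kronecker step is false, and the rest of your plan leans on it.} Kronecker's theorem applies only to algebraic integers, and \(u(k)=\hat g(k)/\hat f(k)\) need not be one. The paper's own \(\ZZ_6\) pair is a counterexample. There \(u(1)=42/(39-9\sqrt{-3})=(13+3\sqrt{-3})/14\), and both conjugates \(u(1),u(5)\) lie on the unit circle. Yet its minimal polynomial is \(x^2-\tfrac{13}{7}x+1\), so it is not a root of unity---and that pair agrees through \(\rho_5\). So \(u(k)=\pm\zeta_N^j\) cannot be extracted from \(\rho_1,\rho_2\) and Galois equivariance; in the even case it is essentially the conclusion of the theorem.

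For \(N/d\) odd you can repair it: iterating \(u(d)^2=\sigma_2(u(d))\) gives \(u(d)=\sigma_2^r(u(d))=u(d)^{2^r}\), with \(r\) the order of \(2\) modulo \(N/d\). For \(N/d\) even there is no low-order repair. For \(N/d=6\), every zero-sum relation of length \(\le 5\) inside \(D_d\) is built from trivial ones \(x+(-x)=0\); your \(1\equiv a+b+c\) has only the solution \(1+1+5\). For \(N/d=12\), such relations yield only \(\sigma_7(w)=\pm w\), which every modulus-one \(w\in\QQ(\zeta_3)\) satisfies. So Step~1 does not pin \(u\) down ``up to a sign'' on such orbits. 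What the six-term relations must eliminate is an infinite family, not residual \(\pm1\)'s.

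\textbf{The core of the argument is missing.} Step~2 is only asserted: that enough cross-orbit zero-sum relations of length \(\le 4\) (odd \(N\)) or \(\le 6\) exist and force a single global \(c\). That is the actual content of the theorem. The paper never needs roots of unity. It applies Theorem~B with \(v=2\) and special element \(a=e_1\), using Proposition~\ref{sumofunits} to get \(S(f)^2\supseteq 2\ZZ_N\). It then builds a character \(\lambda\) on \(\bangle{S(f)}\) directly from products of the ratios \(\hat f/\hat g\) over \(S(f)^2\) and the coset \(S(f)^2a\), checking well-definedness and multiplicativity with relations of length \(\le 6\). That mechanism fixes your Step~1 orbit by orbit: for \(N/d\) even, \(D_d+D_d=2d\ZZ_N\) has index \(2\) in \(d\ZZ_N\), so the same construction with special element \(d\) makes \(u|_{D_d}\) a character.

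Your orbit-wise viewpoint does buy something. For your support \(D_6\cup D_{10}\cup D_{15}\subseteq\ZZ_{30}\), \(S(f)^2\) is not an index-2 subgroup, so Theorem~B with \(v=2\) does not literally apply. Orbit-wise, the three orbits lie in distinct CRT factors and glue with no cross relations at all. In general, though, you must exhibit the cross relations and prove compatibility. For example, \(S=D_3\cup D_5\subseteq\ZZ_{45}\) needs the genuinely four-term relation \(21+39+10+20\equiv 0\). The odd bound \(4\) also needs its own argument, since Theorem~B is vacuous for odd \(N\) and Adler--Konheim with \(v=2\) gives only \(6\).
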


Furthermore, \cite[pp.\ 317-318]{grunbaum} contains a number of examples showing sharpness of this bound. 
Our sections \ref{Z6} and \ref{Z30} offer more such examples, and our Theorem \ref{mainthm:A} classifies all such examples on \(\ZZ_6\). 

As is evident from Theorem \ref{adlerkonheim}, the core of the issue is the vanishing of \(\hat{f}\). 
In searching to understand the vanishing of \(\hat{f}\), one might view \(\hat{f}(e_k)\) as a polynomial in a \(k\)th root of unity \(\omega_k = e_k(1) = e^{2\pi i k/N}\): 
\[\hat{f}(e_k) = P(\omega_k) = \sum_{n=0}^{N-1}f(g)\omega_k^n.\]

Consequently, \(\hat{f}\) vanishes on all or none of the elements of the same (additive) order in \(\ZZ_N\) at once.
In fact, we have the following vast generalization of this observation, arising from (and explaining) the (well-known) fact that \(f\) is real-valued if and only if \(\hat{f}(e_{-k}) = \overline{\hat{f}(e_k)}\). 

\begin{theorem}[Galois action] \label{result}
    Let \(G\) be a finite abelian group, \(f: G \to \CC\), and \(K\) be any subfield of \(\CC\).
    Then \(f(x) \in K\) for all \(x \in G\) if and only if, for all \(e \in \dual{G}\) with \(M = |e|\), we have \(\hat{f}(e) \in L \coloneqq K(\zeta_M)\), 
    and \(\sigma_u(\hat{f}(e)) = \hat{f}(e^u)\) for each \(\sigma_u \in \Gal(L/K) \hookrightarrow \ZZ_M^\times\).
\end{theorem}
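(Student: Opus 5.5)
We will prove both directions directly from the definition \(\hat{f}(e) = \sum_{x\in G} f(x)\,\conj{e(x)}\) (or with \(e(x)\); the convention does not affect the argument) and Fourier inversion \(f(x) = \frac{1}{|G|}\sum_{e\in\dual{G}} \hat{f}(e)\, e(x)\). The key preliminary observation is that a character \(e\) of order \(M\) takes values in the group \(\mu_M\) of \(M\)th roots of unity, so \(e(x)\in \QQ(\zeta_M)\subseteq L\). For \(u\in\ZZ_M^\times\), the automorphism \(\sigma_u\in\Gal(\QQ(\zeta_M)/\QQ)\) sends \(\zeta\mapsto\zeta^u\) on \(\mu_M\), so \(\sigma_u(e(x)) = e(x)^u = e^u(x)\). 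Elements of \(\Gal(L/K)\) restrict injectively to \(\QQ(\zeta_M)\), since \(L=K(\zeta_M)\) is generated over \(K\) by \(\zeta_M\). This gives the embedding \(\Gal(L/K)\hookrightarrow \ZZ_M^\times\) in the statement, and each such \(\sigma_u\) fixes \(K\) and acts on roots of unity by \(u\)th powers.

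\emph{Forward direction.} Suppose \(f\) is \(K\)-valued. Then \(\hat{f}(e)=\sum_x f(x)\conj{e(x)}\) is a \(K\)-linear combination of elements of \(\mu_M\) (note \(\conj{\zeta}=\zeta^{-1}\in\mu_M\)), hence lies in \(L\). For \(\sigma_u\in\Gal(L/K)\), we apply \(\sigma_u\) termwise. It fixes \(f(x)\in K\) and sends \(e(x)^{-1}\) to \(e(x)^{-u} = \conj{e^u(x)}\), which yields \(\sigma_u(\hat{f}(e))=\hat{f}(e^u)\). Note that \(e^u\) again has order \(M\).

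\emph{Reverse direction.} Fix \(x\in G\) and consider \(f(x)=\frac{1}{|G|}\sum_e \hat{f}(e)e(x)\). We partition \(\dual{G}\) by order \(M\). Let \(N\) be the exponent of \(G\) and work in \(L_N=K(\zeta_N)\), which contains every \(L=K(\zeta_M)\) for \(M\mid N\). We show \(f(x)\) is fixed by every \(\tau\in\Gal(L_N/K)\); since \(L_N/K\) is Galois, this gives \(f(x)\in K\). Take \(\tau\) corresponding to \(u\in\ZZ_N^\times\). For a character \(e\) of order \(M\), the restriction of \(\tau\) to \(L\) is \(\sigma_{u \bmod M}\in\Gal(L/K)\), so by hypothesis \(\tau(\hat{f}(e))=\hat{f}(e^u)\). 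Likewise \(\tau(e(x))=e^u(x)\). Hence
\[\tau(f(x))=\frac{1}{|G|}\sum_{e}\hat{f}(e^u)e^u(x)=f(x),\]
because \(e\mapsto e^u\) is a bijection of \(\dual{G}\), as \(\gcd(u,N)=1\).

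The main obstacle is bookkeeping in the reverse direction. We must reconcile the different fields \(K(\zeta_M)\) attached to different orders \(M\) with a single automorphism, and check that restricting \(\tau\) to \(K(\zeta_M)\) really is the element labelled by \(u \bmod M\) under the embedding into \(\ZZ_M^\times\). This compatibility follows because both automorphisms act on \(\mu_M\subseteq\mu_N\) by the \(u\)th power map. We also need the standard fact that the fixed field of \(\Gal(L_N/K)\) is exactly \(K\), which holds since \(K(\zeta_N)\) is a splitting field of \(x^N-1\) over \(K\) in characteristic zero.
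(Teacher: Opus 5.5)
Your proof is correct. Your forward direction is the same as the paper's, but your converse takes a somewhat different route.

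The paper never passes to a common field. It partitions \(\dual{G}\) by order \(M\) and then into orbits \(\{e^u : \sigma_u \in \Gal(K(\zeta_M)/K)\}\). It then notes that each orbit contributes \(\sum_{\sigma_u}\hat{f}(e^u)e^u(x) = \tr_{L/K}\bigl(\hat{f}(e)e(x)\bigr)\in K\) to the inversion formula. You instead show that the entire inversion sum is fixed by every \(\tau\in\Gal(K(\zeta_N)/K)\), using that \(e\mapsto e^u\) permutes \(\dual{G}\).

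The orbit-by-orbit argument stays inside each \(K(\zeta_M)\), so it never compares Galois groups of different cyclotomic extensions. It also gives a slightly finer fact: each Galois-orbit piece of the Fourier expansion is separately \(K\)-valued. It tacitly uses that \(\sigma_u\mapsto e^u\) is injective when \(e\) has order exactly \(M\), so that the orbit sum really equals the trace with no multiplicity.

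Your global argument avoids that orbit and trace bookkeeping. The price is the compatibility check \(\tau|_{K(\zeta_M)}=\sigma_{u \bmod M}\), which you correctly identify and verify through the action on \(\mu_M\subseteq\mu_N\).
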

\begin{proof} 
    We use the embedding \(\sigma_u \mapsto u \in \ZZ_M^\times\) such that \(\sigma_u(\zeta) = \zeta^u\) for all \(M\)th roots of unity \(\zeta\).

    First, assume \(f(x) \in K\).
    For \(a \in \ZZ\), \((e^a)(x) = (e(x))^a\) by the operation of the dual group.
    We let \(M=|e|\), so \((e(x))^M = (e^M)(x) = \mathbf{1}(x) = 1\) and \(e(x)\) is an \(M\)th root of unity.
    Since \(f(x)\in K\) and \(\conj{e(x)}\) is a power of \(\zeta_M\), we have 
    \[\hat{f}(e) = \sum_{x \in G} f(x) \conj{e(x)} \in L.\]
    Then, 
    \begin{align*}
        \hat{f}(e^u) &= \sum_{x \in G} f(x) \conj{(e^u)(x)}\\
        &= \sum_{x \in G} f(x) \conj{e(x)}^u\\
        &= \sum_{x \in G} \sigma_u(f(x)) \sigma_u(\conj{e(x)})\\
        &= \sigma_u(\hat{f}(e))
    \end{align*}
    because \(\sigma_u\) acts as the identity on \(K\) and raises \(M\)th roots of unity to the \(u\)th power.

    Now, assume that \(\hat{f}(e^u) = \sigma_u(\hat{f}(e))\) for each \(\sigma_u \in \Gal(L/K)\).
    Recall Fourier inversion:
    \[
        f(x) = \frac{1}{|G|}\sum_{e\in \dual{G}} \hat{f}(e)e(x).
    \]
    For any \(e \in \dual{G}\), consider the group action \(\sigma_u \cdot e \coloneqq e^u\) on the set of generators of \(\bangle{e}\)
    (this is the action of the Galois group on the roots of unity pulled through the isomorphism \(\mu_n \cong \dual{G}\)).
    Then consider the sum over the orbit \(\{e^u: \sigma_u \in \Gal(L/K)\}\):
    \begin{align*}
           \sum_{\mathclap{\sigma_u \in \Gal(L/K)}} \hat{f}(e^u)[(e^u)(x)]
        &= \sum_{\mathclap{\sigma_u \in \Gal(L/K)}} \sigma_u(\hat{f}(e))e(x)^u\\
        &= \sum_{\mathclap{\sigma_u \in \Gal(L/K)}} \sigma_u(\hat{f}(e)e(x))\\
        &= \tr_{L/K} (\hat{f}(e)e(x)) \in K
    \end{align*}
    This sum is in \(K\) because it is invariant under the action of \(\Gal(L/K)\).
    Then, we partition \(\dual{G}\) first by the order of each element, then by which orbit each element falls into.
    Since the sum over each orbit is in \(K\), \(f(x) \in K\) for all \(x \in G\).
\end{proof}

During the preparation of this paper, the above theorem appeared independently in \cite[Theorem II.7]{casper} in the special case \(K = \QQ\), where it is used to study higher-order autocorrelations for functions with noncyclic domain.

We will use Theorem \ref{result} in two ways: 
(1) to investigate and ultimately classify examples of functions on cyclic groups which are \emph{not} determined by their low-order autocorrelation data, using techniques of algebraic number theory, and 
(2) to generalize theorems \ref{adlerkonheim} and \ref{grunbaum-moore} to the case where the domain \(G\) is not necessarily cyclic. 
We will prove the following three results: 
\begin{restatable}{mainthm}{thmA}
\label{mainthm:A}
    Let \(f,g : \ZZ_6 \to \QQ\).
    \(\rho_n(f) = \rho_n(g)\) for \(n = 1, 2, 3, 4, 5\) but not for \(n=6\) if and only if 
    \begin{itemize}
        \item \(\hat{f}(e_0) = \hat{g}(e_0)\),
        \item \(\hat{f}(e_k) = \hat{g}(e_k) = 0\) for \(k = 2, 3, 4\),
        \item \(|\hat{f}(e_1)| = |\hat{g}(e_1)|\), and
        \item \(\hat{f}(e_1)^6 \ne \hat{g}(e_1)^6\).
    \end{itemize}
    Up to scaling, such pairs of functions correspond to natural numbers \(L = |\hat{f}(e_1)|^2\) with more than six distinct factorizations in \(\ZZ[\omega_3]\), or equivalently to \(L \in \NN\) with at least one prime factor that is 1 modulo 3.
\end{restatable}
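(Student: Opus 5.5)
We work entirely on the Fourier side. Write \(a_k = \hat{f}(e_k)\) and \(b_k = \hat{g}(e_k)\) for \(k \in \ZZ_6\). By (\ref{autocorrelation fourier transform}) and Fourier inversion, \(\rho_n(f) = \rho_n(g)\) holds if and only if
\[a_{k_1}\dotsm a_{k_n} = b_{k_1}\dotsm b_{k_n}\quad\text{whenever } k_1+\dotsb+k_n \equiv 0 \pmod 6.\]
By Theorem \ref{result} with \(K=\QQ\), \(f\) is \(\QQ\)-valued if and only if \(a_0, a_3 \in \QQ\), \(a_2 = \conj{a_4} \in \QQ(\omega_3)\), and \(a_1 = \conj{a_5} \in \QQ(\omega_6) = \QQ(\omega_3)\), with no further constraints. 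The case \(n=1\) gives \(a_0 = b_0\). The case \(n=2\) gives \(|a_k| = |b_k|\) for all \(k\), so \(S(f) = S(g)\).

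\textbf{Converse direction.} Assume the four listed conditions. Any product \(a_{k_1}\dotsm a_{k_n}\) with \(k_1+\dotsb+k_n \equiv 0\) and no factor vanishing uses only the indices \(0, 1, 5\). The number \(r\) of indices equal to \(1\) and the number \(s\) equal to \(5\) must satisfy \(r \equiv s \pmod 6\). For \(n \le 5\) this forces \(r = s\), so each product equals \(a_0^{\,n-2r}|a_1|^{2r}\), and these agree for \(f\) and \(g\). For \(n=6\) the tuple \((1,1,1,1,1,1)\) gives \(a_1^6 \ne b_1^6\), so \(\rho_6(f) \ne \rho_6(g)\).

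\textbf{Forward direction.} Assume \(\rho_1,\dots,\rho_5\) agree but \(\rho_6\) does not. I claim that if any of \(a_2, a_3, a_4\) is nonzero, then \(f\) and \(g\) are translates, which contradicts \(\rho_6(f)\neq\rho_6(g)\). The aim is to find a sixth root of unity \(\lambda\) with \(b_k = \lambda^k a_k\) for all \(k\), which is exactly the statement that \(g\) is a translate of \(f\). The argument is a case analysis on which of \(a_1, a_2, a_3\) are nonzero.

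\begin{itemize}
\item If \(a_1 \ne 0\), set \(\lambda = b_1/a_1\), which has modulus \(1\).
\begin{itemize}
\item If \(a_3 \ne 0\), the order-4 relation \(a_1^3a_3 = b_1^3b_3\) together with \(a_3, b_3 \in \QQ\) and \(|a_3| = |b_3|\) gives \(b_3 = \lambda^{-3}a_3\) and \(\lambda^3 = \pm 1\).
\item If \(a_2 \ne 0\), the order-3 relation \(a_1^2a_4 = b_1^2b_4\) gives \(b_2 = \lambda^2a_2\). The relation \(a_2^3 = b_2^3\) then gives \(\lambda^6 = 1\).
\end{itemize}
In either subcase \(\lambda^6 = 1\), so \(b_k = \lambda^k a_k\) for every \(k\).
\item If \(a_1 = 0\), then \(S(f) \subseteq \{0,2,3,4\}\). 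Here I use \(a_2^3\), \(a_3^2\), and \(a_2a_4\) to choose \(\lambda\) directly; the \(\rho_5\) relations should not even be needed in this case.
\end{itemize}

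The main bookkeeping obstacle is making sure this case analysis is exhaustive, including the degenerate cases \(a_0 = 0\) and \(a_1 = 0\). Once \(a_2 = a_3 = a_4 = 0\) is established, the four conditions follow: \(a_0 = b_0\) and \(|a_1| = |b_1|\) come from \(\rho_1\) and \(\rho_2\), and \(\rho_6(f)\neq\rho_6(g)\) forces \(a_1^6 \ne b_1^6\).

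\textbf{Counting.} Up to scaling, \(a_1 = \alpha\) and \(b_1 = \beta\) lie in \(\ZZ[\omega_3]\) and satisfy \(N(\alpha) = N(\beta) = L\). By the Galois description above, every such \(\alpha\) gives a rational \(f\). The condition \(\alpha^6 \ne \beta^6\) says that \(\beta/\alpha\) is not one of the six units of \(\ZZ[\omega_3]\). So a valid pair exists exactly when \(L\) has more than six elements of norm \(L\), that is, more than one associate class.

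By unique factorization in \(\ZZ[\omega_3]\):
\begin{itemize}
\item a prime \(p \equiv 1 \pmod 3\) dividing \(L\) splits as \(\pi\conj{\pi}\), and then \(\alpha\) and \(\alpha\conj{\pi}/\pi\) are non-associate elements of norm \(L\);
\item if every prime factor of \(L\) is \(3\) or inert, the element of norm \(L\) is unique up to units.
\end{itemize}
This gives the stated equivalence.
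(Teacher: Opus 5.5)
Your argument is correct. The converse direction is the paper's; the differences are in the forward direction and, mildly, in the counting.

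\textbf{Forward direction.} The paper first invokes a zero-sum fact it leaves to the reader: every \(6\)-tuple in \(\ZZ_6\) summing to \(0\), other than \((1,\dots,1)\) and \((5,\dots,5)\), splits into two zero-sum subtuples. This gives \(a_1^6\neq b_1^6\) immediately. The paper then shows that \(a_3\neq0\) or \(a_4\neq0\) would force \(a_1^6=b_1^6\), using the same relations you use (\(a_1^3a_3\), \(a_1^2a_4\), \(a_3^2\), \(a_4^3\)). You instead prove a rigidity statement: if \(a_2\), \(a_3\) or \(a_4\) is nonzero, then \(b_k=\lambda^k a_k\) for a sixth root of unity \(\lambda\), so \(g\) is a translate. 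After that you need only the \(r\equiv s \pmod 6\) bookkeeping on \(\{0,1,5\}\) to see that \(\rho_6\) can differ only at \((1,\dots,1)\) and \((5,\dots,5)\).

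Your route bypasses the zero-sum fact entirely. It also shows a bit more: agreement through order \(4\) already forces a translate unless \(S(f)\subseteq\{e_0,e_1,e_5\}\). The cost is the case \(a_1=0\), which the paper gets for free, since \(a_1^6\neq b_1^6\) forces \(a_1\neq 0\). You only sketch that case. To close it:
\begin{itemize}
\item let \(\mu=b_2/a_2\), a cube root of unity by \(a_2^3=b_2^3\) (set \(\mu=1\) if \(a_2=0\));
\item let \(\epsilon=b_3/a_3\in\{\pm1\}\) (set \(\epsilon=1\) if \(a_3=0\));
\item take \(\lambda=\epsilon\mu^2\).
\end{itemize}
Then \(\lambda^2=\mu\), \(\lambda^3=\epsilon\) and \(\lambda^4=\mu^2=\conj{\mu}\). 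Since \(b_4=\conj{b_2}=\conj{\mu}\,a_4\), you get \(b_k=\lambda^k a_k\) for every \(k\), with \(b_1=b_5=0\) automatically.

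\textbf{Counting.} You derive the criterion directly from unique factorization in \(\ZZ[\omega_3]\), rather than citing the representation count \(6\,d(r)\) as the paper does. This is fine, but you should add the case distinction needed for integrality. From \(p\mid\alpha\conj{\alpha}\) you get \(\pi\mid\alpha\) or \(\conj{\pi}\mid\alpha\), and in the second case you should use \(\alpha\pi/\conj{\pi}\) instead of \(\alpha\conj{\pi}/\pi\). The two elements are then not associates, because \(\conj{\pi}/\pi\) is not a unit when \(p\) splits. Like the paper, you tacitly take \(L\) to be a norm from \(\ZZ[\omega_3]\), which is what the statement intends.
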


\begin{example}
The functions on \(\ZZ_{30}\) given by 
\begin{align*}
    f = [&56, -7, 7, 14, 7, 28, -14, -7, 7, 14, -28, -7, -14, -7, 7, \\
        -&56, 7, -7, -14, -7, -28, 14, 7, -7, -14, 28, 7, 14, 7, -7];\\
    g = [&52, -2, 11, 13, 2, 44, -13, -2, 11, 13, -8, -11, -13, -2, 11, \\
        -&52, 2, -11, -13, -2, -44, 13, 2, -11, -13, 8, 11, 13, 2, -11].
\end{align*}
share autocorrelation data through order 5, but are not translates of each other.
\end{example}

\begin{restatable}{mainthm}{thmB}
\label{mainthm:B}
    For \(G\) a locally compact abelian group and \(f\in L^1(G,\RR)\), suppose \(S(f)^v\) is an index 2 subgroup of \(\bangle{S(f)}\).
    Then \(\{\rho_n : n = 1,\dotsc, 3v\}\) is a complete set of translation invariants.
\end{restatable}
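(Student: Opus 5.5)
Write \(H=\bangle{S(f)}\) and \(H_v=S(f)^v\), the set of products of \(v\) elements of \(S(f)\) (repetition allowed). By hypothesis \(H_v\) is a subgroup of index 2 in \(H\). I would follow the Adler--Konheim strategy behind Theorem~\ref{adlerkonheim}: use the autocorrelations to recover \(\hat f\) on a generating set up to a character, and then propagate.

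\textbf{Step 1: support and modulus.} Suppose \(\rho_n(f)=\rho_n(g)\) for \(n\le 3v\). From \(\rho_2\) we get \(|\hat f|=|\hat g|\), so \(S(f)=S(g)=:S\). From \(\rho_1\) we get \(\hat f(\mathbf 1)=\hat g(\mathbf 1)\). Define \(\phi(\chi)=\hat g(\chi)/\hat f(\chi)\) on \(S\). Then \(|\phi|=1\), and reality of \(f,g\) gives \(\phi(\chi^{-1})=\conj{\phi(\chi)}\).

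\textbf{Step 2: well-defined extension to \(H_v\).} For \(\chi_1,\dots,\chi_v\in S\) with product \(\psi\in S\), I would apply the \((v+1)\)-st order data via (\ref{autocorrelation fourier transform}) to the tuple \((\chi_1,\dots,\chi_v,\psi^{-1})\). This gives \(\phi(\chi_1)\cdots\phi(\chi_v)=\phi(\psi)\), which is a partial multiplicativity. More generally, suppose two \(v\)-fold products agree, \(\chi_1\cdots\chi_v=\eta_1\cdots\eta_v\). Then \(\chi_1\cdots\chi_v\eta_1^{-1}\cdots\eta_v^{-1}=\mathbf 1\) is a relation of length \(2v\le 3v\), so the order-\(2v\) data yields \(\prod\phi(\chi_i)=\prod\phi(\eta_i)\). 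Hence \(\Phi(\chi_1\cdots\chi_v):=\prod\phi(\chi_i)\) is well defined on \(H_v\).

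Next, \(\Phi\) is a homomorphism on \(H_v\). Given \(a=\prod\chi_i\), \(b=\prod\eta_i\), and \(ab=\prod\xi_i\) in \(H_v\), the relation \(\prod\chi_i\prod\eta_i\prod\xi_i^{-1}=\mathbf 1\) has length \(3v\). This is exactly where order \(3v\) is used, and it gives \(\Phi(a)\Phi(b)=\Phi(ab)\).

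\textbf{Step 3: extend to \(H\).} This is the key step and the main obstacle. Fix any \(\chi_0\in S\). Since \(H_v\) has index 2 in \(H\), I would argue that \(S\subset H_v\cup cH_v\) for a single coset \(cH_v\). If some element of \(S\) lay in \(H_v\), then \(S^v\) would meet both cosets, or would be contained in \(H_v\) in a way that forces \(S\subseteq H_v\) and hence \(H=H_v\), a contradiction. So in fact \(S\subseteq cH_v\), with \(v\) even modulo the relevant parity so that \(S^v\subseteq H_v\).

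Now define \(\Phi(\chi_0 h)=\phi(\chi_0)\Phi(h)\) for \(h\in H_v\). To check that this is a homomorphism on \(H\), I need \(\phi(\chi_0)^2=\Phi(\chi_0^2)\). For \(v\ge2\) I would write \(\chi_0^2\cdot(\text{a }(v-2)\text{-fold product})\) to land in \(H_v\) and use the relation bookkeeping from Step 2, still with length at most \(3v\). I also need \(\phi(\chi)=\Phi(\chi)\) for every \(\chi\in S\). This follows by writing \(\chi=\chi_0\cdot(\chi_0^{-1}\chi)\) and expressing \(\chi_0^{-1}\chi\in H_v\) as a product of length at most \(v\) through appropriate relations. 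I expect these length counts to be the delicate part: one must check that every relation invoked has length at most \(3v\).

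\textbf{Step 4: conclude.} Step 3 produces a unitary character \(\Phi\) of the subgroup \(H\le\dual G\) that agrees with \(\phi\) on \(S\). By Pontryagin duality, \(\Phi\) extends to a character of \(\dual G\), which is evaluation at some \(a\in G\). Then \(\hat g(\chi)=\chi(a)\hat f(\chi)\) for all \(\chi\): on \(S\) by construction, and off \(S\) both sides vanish. Fourier uniqueness then gives \(g(x)=f(x+a)\), with the sign convention fixed appropriately.
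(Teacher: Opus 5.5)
Your proposal is correct and follows essentially the paper's route: you build a character on \(S(f)^v\) from the ratios (well defined by the order-\(2v\) data, multiplicative by the order-\(3v\) data), extend it to the nontrivial coset through a fixed element of \(S(f)\), checking \(\Phi(\chi_0^2)=\phi(\chi_0)^2\) by padding with \((\chi_0\chi_0^{-1})^{(v-2)/2}\), and conclude by duality. Your observation that all of \(S(f)\) lies in the nontrivial coset (so \(v\) is even and any \(\chi_0\in S(f)\) works) is slightly sharper than the paper's argument, which only shows that some special element \(a\) exists, but your Step 3 justification is garbled and should be replaced by the one-line argument that for \(s,t\in S(f)\) both \(st^{v-1}\) and \(t^v\) lie in \(S(f)^v\), hence \(st^{-1}\in S(f)^v\).
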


We will extract from Theorem \ref{mainthm:B} an immediate proof of Theorem \ref{grunbaum-moore}.

\section{Examples on \texorpdfstring{\(\ZZ_6\)}{ℤ₆}} \label{Z6}
An immediate application is the construction of examples of pairs of functions \(\ZZ_6 \to \QQ\) not distinguished by their low-order autocorrelation data. 
Gr\"unbaum and Moore construct a handful of examples in \cite[Section 6]{grunbaum}; 
Casper and Orozco give an infinite family generalizing those of Gr\"unbaum and Moore in \cite[Example IV.1]{casper}.
We present an slightly larger family, give an number-theoretic method of constructing concrete examples, and indeed prove that we have a complete classification of such pairs of functions. We write $e_k$ for the character of $\ZZ_N$ corresponding to $k \in \ZZ_N$, namely $x \mapsto e^{2\pi i k x/N}$.

\thmA*

\begin{proof}
	\(\rho_n(f) = \rho_n(g) \iff \dual{\rho}_n(f) = \dual{\rho}_n(g)\), so it is natural to work with the values of \(\hat{f}\) and \(\hat{g}\).
	We again will treat \(\dual{\rho}_n(f)\) as a function on \(n\) characters whose product is the identity.

    Suppose \(\rho_n(f) = \rho_n(g)\) for \(n = 1, 2, 3, 4, 5\) but not for \(n=6\).
	First, 
    \[\hat{f}(e_0) = \rho_1(f)(e_0) = \rho_1(g)(e_0) = \hat{g}(e_0).\]

    Then compute all values of \(\dual{\rho}_6(f)\) and \(\dual{\rho}_6(g)\).
    One may check that among all lists of six characters whose product is the identity, all lists can be broken up into two lists, each of whose product is the identity, except for \((e_1,e_1,e_1,e_1,e_1,e_1)\) and \((e_5,e_5,e_5,e_5,e_5,e_5)\).
    This means that for any other such list \(A\), separated this way into lists \(B\) and \(C\) of lengths \(b\) and \(c\), we have
    \[\dual{\rho}_6(f)(A) = \dual{\rho}_b(f)(B)\dual{\rho}_c(f)(C) = \dual{\rho}_b(g)(B)\dual{\rho}_c(g)(C) = \dual{\rho}_6(g)(A).\]
    Thus, \(\dual{\rho}_6(f)\) and \(\dual{\rho}_6(g)\) must differ at \((e_1,e_1,e_1,e_1,e_1,e_1)\) (and equivalently at \((e_5,e_5,e_5,e_5,e_5,e_5)\)).
    Therefore,
    \begin{equation} \label{6th differ}
        \hat{f}(e_1)^6 = \dual{\rho}_6(f)(e_1,e_1,e_1,e_1,e_1,e_1) \ne \dual{\rho}_6(g)(e_1,e_1,e_1,e_1,e_1,e_1) = \hat{g}(e_1)^6.
    \end{equation}
    We also have 
    \[|\hat{f}(e_1)|^2 = \hat{f}(e_1)\hat{f}(e_5) = \rho_2(f)(e_1,e_5) = \rho_2(g)(e_1,e_5) = \hat{g}(e_1)\hat{g}(e_5) = |\hat{g}(e_1)|^2,\]
    so \(|\hat{f}(e_1)| = |\hat{g}(e_1)|\).

    If \(e_3 \in S(f)\), then using \(e_3^2 = e_1^3 e_4 = 1\),
    \begin{align*}
        \hat{f}(e_3)^2 = \dual{\rho}_2(f)(e_3, e_3) &= \dual{\rho}_2(f)(e_3, e_3) = \hat{g}(e_3)^2 \ne 0,\\
        \text{and }\hat{f}(e_1)^3\hat{f}(e_3) = \dual{\rho}_4(f)(e_1,e_1,e_1,e_3) &= \dual{\rho}_4(g)(e_1,e_1,e_1,e_3) = \hat{g}(e_1)^3\hat{g}(e_3),\\
        \implies \hat{f}(e_1)^6\hat{f}(e_3)^2 &= \hat{g}(e_1)^6\hat{g}(e_3)^2,\\
        \implies \hat{f}(e_1)^6 &= \hat{g}(e_1)^6,
    \end{align*}
    contradicting (\ref{6th differ}).
	If \(e_4 \in S(f)\) (and equivalently \(e_2 \in S(f)\)), then using \(e_4^3 = e_1^2 e_4 = 1\),
    \begin{align*}
        \hat{f}(e_4)^3 = \dual{\rho}_3(f)(e_4,e_4,e_4) &= \dual{\rho}_3(g)(e_4,e_4,e_4) = \hat{g}(e_4)^3 \ne 0,\\
        \text{and }\hat{f}(e_1)^2\hat{f}(e_4) = \dual{\rho}_3(f)(e_1,e_1,e_4) &= \dual{\rho}_3(g)(e_1,e_1,e_4) = \hat{g}(e_1)^2\hat{g}(e_4),\\
        \implies \hat{f}(e_1)^6\hat{f}(e_4)^3 &= \hat{g}(e_1)^6\hat{g}(e_4)^3,\\
        \implies \hat{f}(e_1)^6 &= \hat{g}(e_1)^6,
    \end{align*}
    again contradicting (\ref{6th differ}).
    Therefore \(e_2, e_3, e_4 \notin S(f)\), so \(\hat{f}(e_k) = \hat{g}(e_k) = 0\) for \(k = 2, 3, 4\).

    Now proving the reverse direction, the only potentially nonvanishing lists of inputs \(A\) for \(\dual{\rho}_n(f)\) and \(\dual{\rho}_n(g)\) for \(n \leq 5\) have an equal number of \(e_1\) and \(e_5\).
    So, 
    \begin{gather*}
        \dual{\rho}_n(f)(A) = \hat{f}(e_0)^r (\hat{f}(e_1) \hat{f}(e_5))^s = \hat{f}(e_0)^r |\hat{f}(e_1)|^{2s} =\\
        \hat{g}(e_0)^r |\hat{g}(e_1)|^{2s} = \hat{g}(e_0)^r (\hat{g}(e_1) \hat{g}(e_5))^s = \dual{\rho}_n(g)(A).
    \end{gather*}

    Finally, we have already seen that \(\hat{f}(e_1)^6 \ne \hat{g}(e_1)^6\) guarantees that \(\dual{\rho}_6(f) \ne \dual{\rho}_6(g)\).

    We continue to the second part of the theorem: computing examples of such pairs of functions.
    By Theorem \ref{result}, we have \(\hat{f}(e_0) = \hat{g}(e_0) \in \QQ\) and \(\hat{f}(e_1), \hat{g}(e_1)\in\QQ(\zeta_6)\).
    When finding values for \(\hat{f}(e_1)\), we may instead ``clear denominators'' and work over the ring of integers \(\mathcal{O}_{\QQ(\zeta_6)} = \ZZ[\zeta_6]\) by multiplying \(\hat{f}(e_1)\) and \(\hat{g}(e_1)\) by an appropriate integer.
    We let \(\hat{f}(e_1) = x+y\zeta_6\) for integers \(x,y\), and compute 
    \[|\hat{f}(e_1)|^2 = x^2 + xy + y^2.\]
    If we set \(|\hat{f}(e_1)|^2 = L \in \NN\), each solution for \(x+y\zeta_6\) lies in a set of six, differing by sixth roots of unity.
    Since we require \(\hat{f}(e_1)^6 \ne \hat{g}(e_1)^6\), we want to find integers \(L\) with more than six unique integer solutions to \(L = x^2 + xy + y^2\). 
    The solution to this problem lies in the domain of algebraic number theory:
    \begin{proposition}
        \textnormal{\cite[p.\ 33]{dummitNT2}}
        If \(L = 3^k r s\), where \(r\) is a product of primes \(1\) modulo \(3\) and \(s\) is a product of primes \(2\) modulo \(3\), then 
        \(L = x^2 + xy + y^2\) has a solution in integers \(x,y\) if and only if \(s\) is a square.
        In this case, the number of solutions \((x,y)\) is 6 times the number of positive divisors of \(r\).
    \end{proposition}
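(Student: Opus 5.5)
The proposition is the classical count of representations by the norm form of the Eisenstein integers. I will prove it by working in the ring \(\ZZ[\omega_3] = \ZZ[\zeta_6]\), a Euclidean domain and hence a UFD, with unit group \(\{\pm 1, \pm\zeta_6, \pm\zeta_6^2\}\) of order six.

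The first step is to identify \(x^2+xy+y^2\) with the norm \(N(x+y\zeta_6) = (x+y\zeta_6)(x+y\conj{\zeta_6})\). This uses \(\zeta_6+\conj{\zeta_6}=1\) and \(\zeta_6\conj{\zeta_6}=1\). So integer solutions \((x,y)\) correspond bijectively to elements \(\alpha\in\ZZ[\zeta_6]\) with \(N(\alpha)=L\).

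The second step is to record how rational primes factor. The prime \(3\) ramifies: \(3 = -\zeta_6^2(1-\zeta_6)^2\) up to a unit, and \(1-\zeta_6\) has norm \(1\)... I should be careful here: \(\lambda=1+\zeta_6\), or equivalently \(\sqrt{-3}\), is the ramified prime of norm \(3\). A prime \(p\equiv 1 \pmod 3\) splits as \(p=\pi\conj\pi\) with \(\pi,\conj\pi\) non-associate. This follows because \(-3\) is a square mod \(p\), or equivalently because \(\ZZ_p^\times\) contains an element of order \(3\), and one then takes a gcd in the Euclidean domain. A prime \(p\equiv 2 \pmod 3\) stays inert, since \(x^2+xy+y^2\not\equiv 0 \pmod p\) unless \(p\mid x,y\); this is checked by the same Legendre symbol argument.

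The third step is to count. Write \(\alpha = u\,\lambda^{a}\prod_i \pi_i^{b_i}\conj{\pi_i}^{c_i}\prod_j q_j^{d_j}\). Taking norms gives \(N(\alpha)=3^{a}\prod p_i^{b_i+c_i}\prod q_j^{2d_j}\). Comparing this with \(L=3^k r s\) by unique factorization in \(\ZZ\) forces:
\begin{itemize}
\item \(a=k\), with no choice;
\item each exponent of \(q_j\) in \(s\) is even, which is exactly the condition that \(s\) is a square, with no choice;
\item for each \(p_i^{m_i}\parallel r\), a pair with \(b_i+c_i=m_i\), giving \(m_i+1\) choices.
\end{itemize}
Together with the \(6\) units, unique factorization makes distinct choices give distinct \(\alpha\). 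So the count is \(6\prod(m_i+1)=6\,d(r)\). Conversely, when \(s\) is a square such an \(\alpha\) exists, which gives the existence criterion.

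The main obstacle is the splitting of primes \(p\equiv1\pmod 3\) into non-associate conjugates. I will handle it by choosing \(n\) with \(n^2+n+1\equiv0\pmod p\), which exists since \(3\mid p-1\), and setting \(\pi=\gcd(p,n-\zeta_6^2)\) or a similar element. That \(\pi\) and \(\conj\pi\) are non-associate follows because otherwise \(p\) would divide \(\pi^2\) up to a unit, and hence be ramified, which would force \(p\mid \mathrm{disc}=-3\).
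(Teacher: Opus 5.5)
Your proof is correct, and it is the standard argument. The paper does not prove this proposition; it only cites Dummit, where the count is obtained the way you do it: identify \(x^2+xy+y^2\) with the norm on \(\ZZ[\zeta_6]\), determine how each rational prime factors, and count exponent choices using unique factorization together with the six units.

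A few points to clean up in the write-up:

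\textbf{The identity for 3.} The identity \(3=-\zeta_6^2(1-\zeta_6)^2\) is false. Since \(1-\zeta_6=\conj{\zeta_6}\) is a unit, the right side equals \(-1\). Replace it with \((1+\zeta_6)^2=3\zeta_6\), that is, \(3=\conj{\zeta_6}\,\lambda^2\) with \(\lambda=1+\zeta_6\); this is the relation your count actually uses.

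\textbf{The inert case \(p=2\).} This case is not covered by a Legendre symbol argument. It needs the one-line direct check that \(x^2+xy+y^2\) is odd unless \(x\) and \(y\) are both even.

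\textbf{Non-associativity of \(\pi\) and \(\conj{\pi}\).} You do not need Dedekind's discriminant criterion here. Put \(\omega=\zeta_6^2\). Since \(\pi\mid n-\omega\), if \(\conj{\pi}\) were an associate of \(\pi\) then \(\pi\) would also divide \(n-\conj{\omega}\). Hence \(\pi\) would divide \(\omega-\conj{\omega}=\sqrt{-3}\), and taking norms gives \(p\mid 3\), a contradiction. Relatedly, in that step \(p\) would \emph{equal} a unit times \(\pi^2\), not merely divide it.
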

    Thus, we can simply choose an integer \(r>1\) that is a product of one or more primes \(1\) modulo \(3\) and find its factorization over \(\mathcal{O}_{\QQ(\zeta_6)}\) to get candidates for \(\hat{f}(e_1)\).
\end{proof}

\begin{example}
    The smallest ``interesting'' integer-valued example (where \(f\) is not just a reflection of \(g\)), is the pair
    \begin{align*}
        f&=[13, 11, -2, -13, -11,  2];\\
        g&=[14,  7,  -7, -14, -7,  7].
    \end{align*}
    with \(\hat{f}(e_1) = 39-9\sqrt{-3}, \hat{g}(e_1) = 42\), found by choosing \(r=7\) and clearing denominators.
\end{example}

\section{Examples on \texorpdfstring{\(\ZZ_{30}\)}{ℤ₃₀}}\label{Z30}

We now present a set of examples on \(\ZZ_N\) with \(N \ne 6\). 
Such examples are hard to come by, as \(\varphi(6) = 2\) means it is especially easy to construct functions on \(\ZZ_6\) with small support. Here we take \(N = 30\). 

Similarly to the above functions with \(S(f) = \{e_1, e_5\} \subseteq \dual{\ZZ}_6\), we let the support contain \(e_1\) and the characters implied by it under the Galois action, i.e. the units modulo 30: 
\[S(f) = \{e_1,e_7,e_{11},e_{13},e_{17},e_{19},e_{23},e_{29}\}.\]
If we reuse our values of \(\hat{f}(e_1), \hat{g}(e_1) \in \QQ(\zeta_6)\), we can ask what Theorem \ref{result} requires for \(f,g\) to be rational-valued.
Since \(\zeta_6\) is a sixth root of unity, \(\sigma_1, \sigma_7, \sigma_{13}, \sigma_{19}\) all fix \(\QQ(\zeta_6)\) 
(we actually could have seen this from our earlier discussion computing the Galois groups of intermediate cyclotomic fields).
The other four automorphisms each send \(\zeta_6\) to \(\zeta_6^5 = \conj{\zeta_6}\).
Therefore, we have 
\begin{align*}
    \hat{f}(e_1) = \hat{f}(e_7) &=\hat{f}(e_{13})= \hat{f}(e_{19})\\
    \hat{f}(e_{11}) = \hat{f}(e_{17}) &=\hat{f}(e_{23})= \hat{f}(e_{29}) = \conj{\hat{f}(e_1)}
\end{align*}
Notice that each of the \(e_u\) with \(u \equiv 5\) modulo 6 is the conjugate of the \(e_u\) for \(u\equiv\) 1 modulo 6.
Therefore, just as in the case of \(\ZZ_6\), any list of at most 5 elements from \(S(f)\) whose product is \(1\) 
  must have an equal number of characters whose Fourier coefficient is \(\hat{f}(e_1)\) as characters with coefficient \(\conj{\hat{f}(e_1)}\).
To guarantee that \(\rho_n(f) = \rho_n(g)\) up to order 5, we again only need \(|\hat{f}(e_1)|^2=|\hat{g}(e_1)|^2\).
And once again, when \(n=6\), we can compute \[\rho_6(f)(e_1, e_1, e_1, e_{19}, e_{19}, e_{19}) = \hat{f}(e_1)^6.\]
Therefore, we have exactly the same conditions on \(\hat{f}(e_1)\) and \(\hat{g}(e_1)\) and may reuse the previous solution.
Using the same choices as the explicit example above and clearing denominators, we get the pair of functions
\begin{align*}
    f = [&56, -7, 7, 14, 7, 28, -14, -7, 7, 14, -28, -7, -14, -7, 7, \\
        -&56, 7, -7, -14, -7, -28, 14, 7, -7, -14, 28, 7, 14, 7, -7];\\
    g = [&52, -2, 11, 13, 2, 44, -13, -2, 11, 13, -8, -11, -13, -2, 11, \\
        -&52, 2, -11, -13, -2, -44, 13, 2, -11, -13, 8, 11, 13, 2, -11].
\end{align*}
Unlike in the case of \(\ZZ_6\), this construction does not account for all such pairs of functions, and we conjecture that it is not even a complete classification on this particular support; this is because we are not using the largest field we have access to on this support, namely \(\QQ(\zeta_{30})\).
Other examples of such functions can instead be found using the support generated via the Galois action by \(e_3\) and \(e_5\):
\[S(f) = \{e_3, e_5, e_9, e_{21}, e_{25}, e_{27}\}.\]
Gr\"unbaum and Moore \cite[p.\ 318]{grunbaum} give a single example on this support, while Casper and Orozco \cite[Example IV.4]{casper} give a (much) more general family on \(G = \ZZ_{2pq}^r\) for odd primes \(p,q\), which for \(p=3, q=5, r=1\) specifies to this same support.

\section{New Bounds}
Here we take up the mantle of proving bounds on the orders of autocorrelation data needed to determine certain families of functions. 
The motivating observation is one made by Gr\"unbaum and Moore: 

\begin{proposition}
    \label{sumofunits}
    For \(N>1\) odd or a power of 2, every element of \(\ZZ_N\) is the sum of at most two units.
    Otherwise, each is the sum of at most three units.
\end{proposition}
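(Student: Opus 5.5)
We plan to reduce via the Chinese Remainder Theorem to prime powers. Write \(N = \prod p_i^{a_i}\), so \(\ZZ_N \cong \prod \ZZ_{p_i^{a_i}}\) and \(\ZZ_N^\times \cong \prod \ZZ_{p_i^{a_i}}^\times\). An element \(x \in \ZZ_N\) is a sum of \(k\) units if and only if each component \(x_i \in \ZZ_{p_i^{a_i}}\) is a sum of exactly \(k\) units of \(\ZZ_{p_i^{a_i}}\), with the same \(k\) in every component. So the real task is to determine, for each prime power and each \(k \in \{1,2,3\}\) (and \(k=0\) for \(x=0\)), which elements are sums of exactly \(k\) units.

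The first step is the odd prime power \(q = p^a\) with \(p\) odd. Here \(u \in \ZZ_q\) is a unit if and only if \(p \nmid u\). For every \(x\) we want a unit \(u\) with \(x-u\) also a unit, i.e.\ \(u \not\equiv 0\) and \(u \not\equiv x \pmod p\). This is possible since \(p \geq 3\) leaves at least one residue class mod \(p\) avoiding both. Hence every element is a sum of exactly two units. Likewise, every element is a sum of exactly three units: choose any unit \(w\) and write \(x - w\) as a sum of two units. This uniformity in \(k\) is what makes the CRT gluing work. For \(N\) odd we glue with \(k=2\) in every component, which handles every \(x\), including units and \(0\).

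The second step is \(N = 2^a\). The units are the odd residues. A sum of two units is even, so odd elements cannot be written that way. But odd elements are units themselves, hence sums of one unit. Every even \(x\) equals \(1 + (x-1)\), a sum of two units. So every element is a sum of at most two units, as claimed, though not uniformly in \(k\). This is fine because no gluing is needed in this case.

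The third step is the mixed case \(N = 2^a m\) with \(m>1\) odd, and here the parity constraint is the main obstacle. In \(\ZZ_{2^a}\), a sum of exactly \(k\) units has parity \(k\). We therefore choose \(k\) by the parity of the \(2\)-component of \(x\): take \(k=2\) if it is even and \(k=3\) if it is odd. We need the \(2\)-component to be a sum of exactly \(k\) units. If it is even, use \(1 + (x-1)\). If it is odd, use \(1 + 1 + (x-2)\), which works because \(x-2\) is odd. For the odd components, the first step supplies representations with exactly \(2\) or exactly \(3\) units as needed. Gluing via CRT shows every element is a sum of at most three units. Strictly, the statement only asserts the upper bound, so no sharpness argument is required. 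If desired, sharpness follows from \(N=6\) and \(x=3\): units of \(\ZZ_6\) are \(\pm1\), and sums of two of them lie in \(\{0,\pm2\}\).
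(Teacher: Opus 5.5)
Your proof is correct and follows essentially the paper's route: reduce via the Chinese Remainder Theorem to prime powers, use that only two residue classes mod an odd prime \(p\) need to be avoided (the paper phrases this as ``one of \(c-1\), \(c-2\) is a unit''), and use parity in \(\ZZ_{2^a}\). The difference is only bookkeeping. The paper glues just the statement that every even residue is a sum of exactly two units and then handles odd residues globally as \(1 + (x-1)\); you glue with a parity-dependent \(k \in \{2,3\}\), so you also need the (easy) exactly-three-units fact for odd prime powers.
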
 
\begin{proof}
    We begin by proving that every even residue is the sum of two units: \(\ZZ_N^\times + \ZZ_N^\times = 2\ZZ_N\).
    First, consider \(N=2^k\).
    Every odd residue is relatively prime to the modulus, hence is a unit.
    Thus, for any \(2c \in 2\ZZ_N\) we have \(2c = 1 + (2c-1)\) as a sum of two units.
    Next, let \(N=p^k\) for \(p\) an odd prime.
    Since \(N\) is odd, every residue has an even representative: \(2\ZZ_N = \ZZ_N\).
    Thus, we can take any \(c \in \ZZ_N = 2\ZZ_N\).
    At least one of \(c-1\) or \(c-2\) is a unit since if neither were, then \(p\) would divide their difference, which is \(1\).
    Thus either \(c = 1+(c-1)\) or \(c = 2+(c-2)\) is a sum of two units.

    For a more general \(N = p_1^{k_1} \dotsm p_n^{k_n}\), we let \(2c \in 2\ZZ_N\).
    Then the above discussion proves that for each \(i\) there exist \(a_i, b_i \in \ZZ^\times_{p_i^{k_i}}\) such that \(a_i + b_i \equiv 2c\).
    By the Chinese Remainder Theorem, there exists \(a,b \in \ZZ_N\) such that
    \begin{align*}
        a &\equiv a_i \mod{p_i^{k_i}}\\
        b &\equiv b_i \mod{p_i^{k_i}}
    \end{align*} 
    for all \(i\).
    Then, since \(a_i + b_i \equiv 2c \mod{p_i^{k_i}}\) for each \(i\), we also have \(a + b \equiv 2c \mod{p_i^{k_i}}\), so by the CRT again, \(a+b \equiv 2c \mod{N}\).
    If \(a\) were not a unit modulo \(N\), then some prime \(p_i\) would divide \(a\). 
    But then \(a_i\) would not be a unit modulo \(p_i^{k_i}\), contradicting our assumption.
    Therefore, \(a\) is a unit modulo \(N\), and for the same reason, so is \(b\).

    We have already seen that for \(N=2^k\), every element is either a unit or a sum of two units.
    Again, when \(N\) is odd, every residue has an even representative, and is thus a sum of two units.
    Otherwise, every odd residue is \(1\) plus an even residue, and is thus the sum of at most three units.
\end{proof}

The utility is in tandem with Theorem \ref{adlerkonheim} of Adler and Konheim, and Theorem \ref{result} on the Galois action. 
We obtain immediately the following rather concrete result concerning a finite number of autocorrelations and noncyclic groups. 
It improves the upper bound appearing in the main result of \cite{casper} when some knowledge of \(S(f)\) is assumed.

\begin{theorem}
    \label{9r}
    Let \(G\) be a finite abelian group with invariant factors \(N_1 | \dotsb | N_r\), and let \(k\) be the number of \(N_i\) which are odd or a power of 2.
    Then let \(f : G \to \QQ\), and suppose \(S(f)\) contains the \(r\) ``basis elements'' \(x_i = (1, \dotsc, 1, e_1, 1, \dotsc, 1)\), each from the \(\dual{\ZZ}_{N_i}\) component of \(\dual{G}\).
    The set \(\{\rho_n: n = 1, \dotsc, 3(3r-k)\}\) is a complete set of invariants, and so determines \(f\) up to a shift.
\end{theorem}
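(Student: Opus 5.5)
The plan is to reduce to Theorem \ref{adlerkonheim}. It suffices to show that every element of \(\bangle{S(f)} = \dual{G}\) is a product of at most \(v = 3r - k\) elements of \(S(f)\). Adler--Konheim then gives that \(\{\rho_n : n = 1, \dotsc, 3v\}\) is complete.

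First, \(\bangle{S(f)} = \dual{G}\). Identify \(\dual{G} \cong \dual{\ZZ}_{N_1} \times \dotsb \times \dual{\ZZ}_{N_r}\), so that a character is \((e_{a_1}, \dotsc, e_{a_r})\) with \(a_i \in \ZZ_{N_i}\). The basis elements \(x_i\) lie in \(S(f)\) and generate \(\dual{G}\).

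Next we use Theorem \ref{result} with \(K = \QQ\). Since \(f\) is rational-valued, \(\hat{f}(e^u) = \sigma_u(\hat{f}(e))\) for every \(u \in \ZZ_M^\times\), where \(M = |e|\) and \(\Gal(\QQ(\zeta_M)/\QQ) \cong \ZZ_M^\times\). Because \(\sigma_u\) is a field automorphism, \(\hat{f}(e) \neq 0\) implies \(\hat{f}(e^u) \neq 0\). So \(S(f)\) is closed under \(e \mapsto e^u\) for \(u\) coprime to \(|e|\). Applied to \(x_i\), which has order \(N_i\), this gives
\[
x_i^{u} = (1, \dotsc, 1, e_u, 1, \dotsc, 1) \in S(f) \quad \text{for every } u \in \ZZ_{N_i}^\times .
\]
In other words, \(S(f)\) contains every character supported in the \(i\)th coordinate whose entry is a unit of \(\ZZ_{N_i}\).

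Now take an arbitrary character \((e_{a_1}, \dotsc, e_{a_r})\) and write it as the product of its coordinate pieces \((1, \dotsc, e_{a_i}, \dotsc, 1)\).

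\begin{itemize}
\item By Proposition \ref{sumofunits}, each \(a_i\) is a sum of at most two units of \(\ZZ_{N_i}\) when \(N_i\) is odd or a power of \(2\), and of at most three units otherwise.
\item Hence each coordinate piece is a product of at most \(2\) or \(3\) elements of \(S(f)\), respectively.
\item The case \(N_i = 1\) contributes the trivial factor and is harmless; it needs at most \(0 \le 2\) elements.
\item If \(a_i = 0\), it is still a sum of at most two units, for example \(1 + (-1)\), or it can simply be omitted.
\end{itemize}

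Summing over coordinates, every element of \(\dual{G}\) is a product of at most \(2k + 3(r - k) = 3r - k\) elements of \(S(f)\). Adler--Konheim with \(v = 3r - k\) then yields completeness of \(\{\rho_n : n \le 3(3r - k)\}\).

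The main point needing care is that Theorem \ref{adlerkonheim} requires products of \emph{at most} \(v\) elements, so padding is unnecessary, and the count above is a clean upper bound. One should also confirm that Adler--Konheim applies for \(\RR\)-valued functions on a finite group, with \(\QQ \subseteq \RR\). A second point needing care is the Galois step: \(\sigma_u\) must act on the whole field \(\QQ(\zeta_{N_i})\), so that nonvanishing transfers across the entire orbit \(\{x_i^u\}\). This is exactly what Theorem \ref{result} supplies.
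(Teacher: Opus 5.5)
Your proof is correct and follows the paper's argument. Theorem \ref{result} puts every single-coordinate unit character \(x_i^u\) into \(S(f)\); Proposition \ref{sumofunits} then writes each character as a product of at most \(2k+3(r-k)=3r-k\) elements of \(S(f)\), and Theorem \ref{adlerkonheim} finishes, with your Galois step stated more precisely than the paper's wording (which, read literally, claims that every tuple whose entries are all units lies in \(S(f)\) — more than the Galois action gives, and more than the count needs).
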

\begin{proof}
    By Theorem \ref{result}, every tuple \((k_1, \dotsc, k_r) \in G \cong \bigoplus_{i=1}^r \ZZ_{N_i}\) where each \(k_i\) is a unit in \(\ZZ_{N_i}\) is contained in \(S(f)\) whenever the standard basis elements \((1, 0, \dotsc, 0), \dotsc, (0, \dotsc, 0, 1)\) are. 
    Hence we can apply Proposition \ref{sumofunits} to see that every element is a sum of at most \(3r-k\) elements of \(S(f)\), from which we apply Theorem \ref{adlerkonheim} to conclude the result.
\end{proof}

Furthermore, in light of Proposition \ref{sumofunits}, we expect that for \(N\) even, we can apply Proposition \ref{adlerkonheim} with \(v = 3\) to see that ninth-order data determines a function \(f: \ZZ_N \to \QQ\). 
However, Gr\"unbaum and Moore prove that only sixth-order data is needed. 
We offer in explanation Theorem \ref{mainthm:B}, a refinement of Proposition \ref{adlerkonheim}, from which the Gr\"unbaum-Moore theorem is an immediate consequence.

We first define some useful notation:
\begin{definition}
    Given a subgroup \(H \leq G\), we denote the set of products of \emph{exactly} \(v \in \NN\) elements of \(H\) by
    \[H^v \coloneqq \left\{\prod_{i=1}^v h_i : h_i \in H\right\}.\]
\end{definition}
The following lemma will be used throughout the proof of Theorem \ref{mainthm:B}:
\begin{lemma}\label{2 and k}
    Let \(f,g \in L^1(G,\RR)\) for \(G\) a locally compact abelian group. 
    If \(f\) and \(g\) have the same invariants of order \(2\) and \(k\), then 
    \[\prod_{i=1}^{m}\frac{\hat{f}(x_i)}{\hat{g}(x_i)} = \prod_{j=1}^{n}\frac{\hat{f}(y_j)}{\hat{g}(y_j)}\]
    for any \(x_i, y_j \in S(f)\) such that \(\prod_{i=1}^{m} x_i = \prod_{j=1}^{n} y_j\) and \(m + n = k\).
\end{lemma}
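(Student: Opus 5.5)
The plan is to work entirely on the Fourier side, using the formula (\ref{autocorrelation fourier transform}) for \(\dual{\rho}_n\) together with the fact that \(f,g\) are real-valued, so that \(\hat{f}(\chi^{-1}) = \conj{\hat{f}(\chi)}\) and likewise for \(g\). First I will note that \(S(f) = S(g)\), which makes the ratios in the statement well defined. Equality of the second-order invariants gives \(\hat{f}(\chi)\hat{f}(\chi^{-1}) = \hat{g}(\chi)\hat{g}(\chi^{-1})\), that is, \(|\hat{f}(\chi)|^2 = |\hat{g}(\chi)|^2\) for every character \(\chi\). Hence \(\hat{f}\) and \(\hat{g}\) vanish at the same characters, and for each \(y \in S(f)\) we have
\[\frac{\hat{f}(y^{-1})}{\hat{g}(y^{-1})} = \conj{\left(\frac{\hat{f}(y)}{\hat{g}(y)}\right)} = \left(\frac{\hat{f}(y)}{\hat{g}(y)}\right)^{-1},\]
since the ratio \(\hat{f}(y)/\hat{g}(y)\) has modulus \(1\). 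Also, \(y \in S(f)\) implies \(y^{-1} \in S(f)\).

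Given \(x_1,\dots,x_m, y_1,\dots,y_n \in S(f)\) with \(\prod x_i = \prod y_j\) and \(m+n=k\), I will form the list \(A = (x_1,\dots,x_m, y_1^{-1},\dots,y_n^{-1})\). It has \(k\) entries, all in \(S(f)\), and their product is the identity. Reading \(\dual{\rho}_k\) as a function on \(k\) characters whose product is the identity, as the paper does, the equality \(\dual{\rho}_k(f)(A) = \dual{\rho}_k(g)(A)\) reads
\[\prod_{i}\hat{f}(x_i)\prod_j \hat{f}(y_j^{-1}) = \prod_{i}\hat{g}(x_i)\prod_j \hat{g}(y_j^{-1}).\]
Both sides are nonzero because every entry lies in \(S(f)=S(g)\), so I may divide through. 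Substituting \(\hat{f}(y_j^{-1})/\hat{g}(y_j^{-1}) = (\hat{f}(y_j)/\hat{g}(y_j))^{-1}\) and rearranging then gives the claimed identity.

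There is no serious obstacle; the only care needed is bookkeeping. One must justify that \(\hat{g}\) is nonzero on \(S(f)\), which is exactly what the second-order data provides. In the locally compact case one must also interpret the pointwise equality of \(\dual{\rho}_k\) appropriately (continuity of \(\hat f\) for \(f \in L^1\) handles this). The edge case \(m=0\) or \(n=0\) is covered by the same argument, with an empty product equal to \(1\).
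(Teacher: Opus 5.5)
Your proof is correct and follows essentially the same route as the paper: second-order data gives \(S(g)=S(f)\) and \(\hat{f}(y^{-1})/\hat{g}(y^{-1}) = \bigl(\hat{f}(y)/\hat{g}(y)\bigr)^{-1}\), and then \(k\)th-order data applied to the list \((x_1,\dots,x_m,y_1^{-1},\dots,y_n^{-1})\) finishes the argument. The only difference is that the paper obtains the inverse relation by dividing \(\hat{f}(y)\hat{f}(y^{-1}) = \hat{g}(y)\hat{g}(y^{-1})\) directly, whereas you go through conjugation and the fact that the ratio has modulus one.
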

\begin{proof}
    Since \(\rho_2(f) = \rho_2(g)\) and \(f,g\) are real-valued, we have \(S(g) = S(f)\), so our fractions are well-defined, and we get
    \begin{align*}
        \dual{\rho}_2(f)(y,y^{-1}) &= \dual{\rho}_2(g)(y,y^{-1})\\
        \hat{f}(y)\hat{f}(y^{-1}) &= \hat{g}(y)\hat{g}(y^{-1})\\
        \implies \frac{\hat{g}(y^{-1})}{\hat{f}(y^{-1})} &= \frac{\hat{f}(y)}{\hat{g}(y)}
    \end{align*}
    Since \(\rho_k(f) = \rho_k(g)\), by treating \(\dual{\rho}_k\) as a function on \(k\) characters, we have
    \begin{align*}
        \dual{\rho}_k(f)(x_1, \dotsc, x_m, y_1^{-1}, \dotsc, y_n^{-1}) &= \dual{\rho}_k(g)(x_1, \dotsc, x_m, y_1^{-1}, \dotsc, y_n^{-1})\\
        \prod_{i=1}^{m} \hat{f}(x_i) \prod_{j=1}^{n} \hat{f}(y_j^{-1}) &= \prod_{i=1}^{m} \hat{g}(x_i) \prod_{j=1}^{n} \hat{g}(y_j^{-1})\\
        \implies \prod_{i=1}^{m}\frac{\hat{f}(x_i)}{\hat{g}(x_i)} &= \prod_{j=1}^{n}\frac{\hat{g}(y_j^{-1})}{\hat{f}(y_j^{-1})}
    \end{align*}
    Putting these together, we get the desired equality.
\end{proof}

\thmB*

We show below that the hypothesis is equivalent to there existing a ``special element'' \(a\in S(f)\) such that every element of \(\bangle{S(f)}\) is either the product of exactly \(v\) elements of \(S(f)\), or such a product times \(a\) (i.e. in the nontrivial coset of \(S(f)^v\)).
This is also equivalent to assuming that every element of \(\bangle{S(f)}\) is either the product of \emph{at most} \(v\) elements of \(S(f)\), or such a product times \(a\), since any product of fewer than \(v\) terms can be padded with pairs \(a^{-1}a=1\) until there are either exactly \(v\) terms, or \(v\) terms with an additional \(a\) at the end.
As a warning, this does \emph{not} mean that every element that is a product of at most \(v\) terms is a product of exactly \(v\) terms. 
In Gr\"unbaum and Moore's proof of Theorem \ref{grunbaum-moore}, \(v=2\) and \(a=e_1\) are used; our proof proceeds similarly using such a special element. 

For odd \(v\), this is actually no stronger than Theorem \ref{adlerkonheim}, since for any \(a \in S(f)\) as the special element,
  \(a^{-1}\) is always in \(S(f)\) by Theorem \ref{result} (because complex conjugation is always in the Galois group \(\Gal(\RR(\zeta_M)/\RR)\)), so 
\[\prod_{n=1}^{v} a^{(-1)^{n+1}} = aa^{-1}aa^{-1}\dotsm a = a \in S(f)^v.\]
Therefore, \(S(f)^v\) cannot be an index 2 subgroup for odd \(v\), so we may as well assume that \(v\) is even.

\begin{proof}[Proof of Theorem \ref{mainthm:B}]
    Suppose \(S(f)^v\) is an index 2 subgroup of \(\bangle{S(f)}\) for \(v \geq 2\) even. 
    We claim that there exists some \(a \in S(f)\) that lies in the nontrivial coset.
    
    For the sake of contradiction, assume otherwise, i.e., \(S(f) \subseteq S(f)^v\).
    Since we assumed \(S(f)^v\) is a group, we conclude \(\bangle{S(f)}\subseteq S(f)^v\), contradicting that \(|\bangle{S(f)}:S(f)^v|=2\).
    
    Suppose finally that \(\rho_n(f) = \rho_n(g)\) for \(n = 1, \dotsc, 3v\). 

    We first define a character of \(S(f)^v\) which extends the ratio \(\hat{f}/\hat{g}\) where defined. We will then extend this character to all of \(\bangle{S(f)}\) and conclude by the duality theorem. 
    
    To this end, for any \(x = x_1 x_2 \dotsm x_v \in S(f)^v\) with each \(x_i \in S(f)\), define 
    \[\lambda(x) = \prod_{i=1}^v \frac{\hat{f}(x_i)}{\hat{g}(x_i)}.\] 
    If \(x = \prod_{i=1}^v x_i = \prod_{i=1}^v y_i\) has two such representations, then \[\prod_{i=1}^v \frac{\hat{f}(x_i)}{\hat{g}(x_i)} = \prod_{i=1}^v \frac{\hat{f}(y_i)}{\hat{g}(y_i)}\] by Lemma \ref{2 and k}. 
    Hence \(\lambda\) is well-defined on \(S(f)^v\).
    Furthermore, it agrees with \(\frac{\hat{f}(x)}{\hat{g}(x)}\) if \(x \in S(f)\), since \[\frac{\hat{f}(x)}{\hat{g}(x)} = \prod_{i=1}^v \frac{\hat{f}(x_i)}{\hat{g}(x_i)}\] for \(x = \prod_{i=1}^v x_i\) follows from Lemma \ref{2 and k} and agreement of autocorrelation data of order \(v+1 \leq 3v\) (since \(v \geq 2\)). 
    Finally, we need to check that \(\lambda\) satisfies \[\lambda(xy) = \lambda(x)\lambda(y)\] (i.e., it is a homomorphism). 
    Let \(S(f)^v \ni x = \prod_i x_i\) and \(S(f)^v \ni y = \prod_j y_j\) for \(x_i, y_j \in S(f)\). 
    Since \(S(f)^v\) is a subgroup, \(xy \in S(f)^v\), so write \(xy = \prod_k z_k\) with each \(z_k \in S(f)\). 
    Then 
    \begin{equation}
        \label{lambda(xy)}
        \lambda(xy) = \lambda\left(\prod_{k=1}^v z_k\right) = \prod_{k=1}^v \frac{\hat{f}(z_k)}{\hat{g}(z_k)},
    \end{equation}
    and 
    \begin{equation}
        \label{lambda(x)lambda(y)}
        \lambda(x)\lambda(y) = \prod_{i=1}^v\frac{\hat{f}(x_i)}{\hat{g}(x_i)} \prod_{j=1}^v \frac{\hat{f}(y_j)}{\hat{g}(y_j)},
    \end{equation}
    and equality of the expressions in (\ref{lambda(xy)}) and (\ref{lambda(x)lambda(y)}) follows from equality of \(3v\)th-order autocorrelations and Lemma \ref{2 and k}. 
    After the straightforward checks \[|\lambda(x)| = \left|\frac{\hat{f}(x)}{\hat{g}(x)}\right| = 1\] by second-order data and \[\lambda(e_0) = \frac{\hat{f}(e_0)}{\hat{g}(e_0)} = 1\] by first-order data, we conclude that \(\lambda\) is a character of \(S(f)^v\). 

    To extend \(\lambda\) to all of \(\bangle{S(f)}\), it suffices to define it on elements of the form \(xa, x \in S(f)^v\). 
    We define it in the obvious way: \[\lambda(xa) = \lambda(x) \frac{\hat{f}(a)}{\hat{g}(a)}.\] 
    There is no conflict with the previous definition since we are defining \(\lambda\) on the nontrivial coset \(S(f)^va\). 
    Our new definition agrees with the ratio \(\frac{\hat{f}}{\hat{g}}\) on \(S(f)\) since, if \(x \in S(f)\) can be written \(a\cdot \prod_ix_i\) with each \(x_i \in S(f)\), then \[\lambda(x) = \left[\prod_{i=1}^v \frac{\hat{f}(x_i)}{\hat{g}(x_i)}\right] \frac{\hat{f}(a)}{\hat{g}(a)} = \frac{\hat{f}(x)}{\hat{g}(x)}\] by equality of autocorrelations through order \(v+2 \leq 3v\), and Lemma \ref{2 and k}. 
    Since \(\left|\frac{\hat{f}(a)}{\hat{g}(a)}\right| = 1\), we still have \(|\lambda(x)| = 1\) for all \(x\). 
    It remains only to check that \(\lambda\) is a homomorphism. 

    Let \(x, y \in \bangle{S(f)}\). 
    There are three cases to consider. 
    \begin{enumerate}
        \item 
        Both \(x, y\) lie in \(S(f)^v\). In this case, \(\lambda(xy) = \lambda(x)\lambda(y)\) by the previous argument.

        \item 
        Exactly one of the two, say \(x\), lies in the nontrivial coset \(S(f)^v a\). 
        Write \(x = a\cdot \prod_{i=1}^v x_i\) and \(y = \prod_{j=1}^v y_j\) with the \(x_i, y_j \in S(f)\). 
        Set \(y' = \prod_{j=1}^v y_j\), so that \(y = y'a\). 
        Then 
        \[\lambda(xy) = \lambda(xy'a) = \lambda(xy')\frac{\hat{f}(a)}{\hat{g}(a)}\]
        since \(xy' \in S(f)^v\) as \(S(f)^v\) is a subgroup. 
        But then, since \(\lambda\) acts homomorphically on \(S(f)^v\), 
        \[\lambda(xy')\frac{\hat{f}(a)}{\hat{g}(a)} = \lambda(x)\lambda(y')\frac{\hat{f}(a)}{\hat{g}(a)} = \lambda(x)\lambda(y'a) = \lambda(x)\lambda(y)\] 
        by definition.

        \item 
        Lastly, suppose both \(x, y \in S(f)^va\). 
        Observe first that \(a^2 = a^2(a^{-1}a)^{(v-2)/2} \in S(f)^v\) since \(v\) is even. 
        (Of course, \(a^2 \in S(f)^v\) when \(v\) is odd, too, as \(a \in S(f)^v\) in that case.) 
        Furthermore, using the definition of \(\lambda\) on that subgroup, 
        \[\lambda(a^2) = \frac{\hat{f}(a)}{\hat{g}(a)} \cdot \frac{\hat{f}(a)}{\hat{g}(a)} \cdot \left(\frac{\hat{f}(a)}{\hat{g}(a)} \cdot \frac{\hat{f}(a^{-1})}{\hat{g}(a^{-1})}\right)^{(v-2)/2}.\] 
        By equality of first- and second-order data, 
        \[\frac{\hat{f}(a)}{\hat{g}(a)} \cdot \frac{\hat{f}(a^{-1})}{\hat{g}(a^{-1})} = \frac{\hat{f}(1)}{\hat{g}(1)} = 1,\] 
        so that 
        \[\lambda(a^2) = \left(\frac{\hat{f}(a)}{\hat{g}(a)}\right)^2.\]

        Now, write \(x=x'a\) and \(y = y'a\) with \(x' = \prod_i x_i \in S(f)^v\) and \(y' = \prod_j y_j \in S(f)^v\). Then \[\lambda(x)\lambda(y) = \lambda(x')\frac{\hat{f}(a)}{\hat{g}(a)} \cdot \lambda(y')\frac{\hat{f}(a)}{\hat{g}(a)} = \lambda(x')\lambda(y') \left(\frac{\hat{f}(a)}{\hat{g}(a)}\right)^2 \] 
        \[= \lambda(x'y')\left(\frac{\hat{f}(a)}{\hat{g}(a)}\right)^2 = \lambda(x'y')\lambda(a^2) = \lambda(x'y'a^2) = \lambda(xy)\]
        since \(\lambda\) is a homomorphism on \(S(f)^v \ni a^2, x', y'\).
    \end{enumerate}

    We conclude that \(\lambda\) is a character of \(\bangle{S(f)}\) extending the ratio \(\hat{f}/\hat{g}\) where defined. 
    Hence \[\hat{f}(e) = \lambda(e)\hat{g}(e)\] for all \(e \in \dual{G}\), either because both sides are \(0\) or because \(\lambda(x) = \hat{f}(x)/\hat{g}(x)\). 
    But, by the duality theorem, \(\lambda(e) = e(t)\) for some fixed \(t \in G\), and consequently \[\hat{f}(e) = e(t)\hat{g}(e) \hspace{1cm} \forall e \in \dual{G}\] from which we conclude \(f(x) = g(x+t)\) for all \(x \in G\) by Fourier inversion. 
\end{proof}

Theorem \ref{mainthm:B} turns the ``obvious'' argument in the \(\ZZ_N\) setting into a sharp bound, where existing results (i.e. that of Adler and Konheim) offer provably suboptimal bounds. This sharpness is essential to working effectively with autocorrelation data, as the order of data available is frequently limited.

\section{Acknowledgments}

This work was supported in part by the NSF grant for Algebraic Geometry and Representation Theory at Northeastern University DMS–1645877.

\printbibliography%[heading=bibintoc]
\end{document}